\theoremstyle{theorem}
\newtheorem{theorem}{Theorem}
\theoremstyle{definition}
\newtheorem*{definition}{Definition}
\begin{document}

\title{The Warden's de Bruijn Sequence}

\author{Joseph DiMuro\\               
Biola University\\    
joseph.dimuro@biola.edu}                      

\date{}

\maketitle

\noindent  Imagine the following scenario: you are in prison, facing a long prison sentence. This particular prison has an unusual warden; for his own amusement, he offers his prisoners a chance at freedom. He shows you a row of coins on a table; some are showing heads, and some are showing tails.

\begin{center}
  \includegraphics[height=3cm]{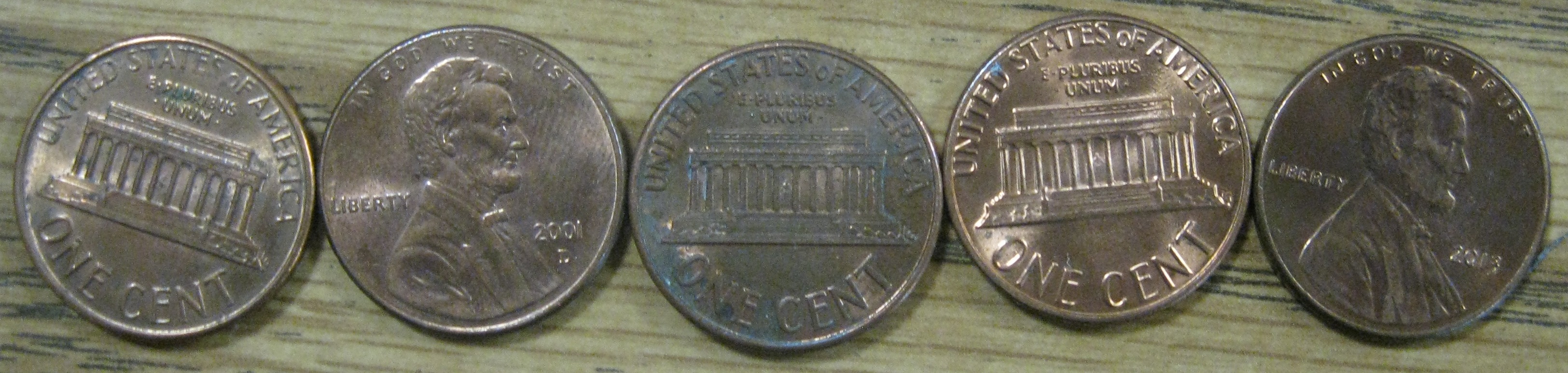}
\end{center}

You and the warden are going to play a game. Your goal is for all of the coins to show tails; if you do that, you've earned your freedom. This game will be played at a rate of one move per day, so of course, you want to win in as few moves as possible. The warden will do his best to delay your victory for as long as he can; indefinitely, if possible.

Each day of your imprisonment, either you or the warden will transfer the rightmost coin to the far left of the row. If the coin is showing heads, then you transfer the coin, and you may optionally flip the coin to tails. But if the coin is showing tails, then the warden transfers the coin, and he may optionally flip the coin to heads.

For instance, starting from THTTH, you get to transfer the rightmost coin, and you may flip it if you wish. Let's say you flip the coin to tails; after the transfer, the position is TTHTT. The warden then gets to transfer the next coin; perhaps he flips the coin to heads, producing HTTHT. The next coin is tails, so the warden takes another turn; perhaps he leaves the coin as tails, producing THTTH. It is now your turn again; sadly, we are back to the starting position, so you have not made any progress.

In this game, you can flip heads to tails, but the warden can flip tails back to heads just as quickly. So what can you do? Can you earn your freedom, and if so, how do you do so in the fewest number of moves? Or can the warden keep you in prison indefinitely? The reader is encouraged to give this game a try before reading on. 

\section{The Warden Loses}

It turns out that, from any starting position, you can ensure your freedom in a finite amount of time if you play correctly.

\begin{theorem}[Basic Win Strategy]
Starting from any sequence of heads and tails, there is a strategy for the prisoner that ensures that all coins are flipped to tails in finitely many moves.
\end{theorem}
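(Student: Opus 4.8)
The plan is to strip away the geometry of the row and reduce the entire game to a single binary sequence. Encode heads as $1$ and tails as $0$, and write a configuration as $a_1\cdots a_n$ with $a_n$ the rightmost coin. The observation I would establish first is this: when a coin is transferred it is placed at the far left and is never touched again until it has drifted back to the rightmost slot, which happens exactly $n$ moves later. Hence the whole play is governed by one bit-stream $x_1,x_2,x_3,\dots$, namely the values of the coins in the order they are processed, in which the bit inserted at step $t$ is the bit read at step $t+n$; that is, $x_{t+n}$ is chosen by the prisoner when $x_t=1$ and by the warden when $x_t=0$. In this language the all-tails position is exactly the appearance of $n$ consecutive $0$'s in the stream, so the theorem becomes: the prisoner can force $n$ consecutive $0$'s.

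With this reduction in hand I would view the $2^n$ configurations as the vertices of the order-$n$ shift-register (de Bruijn) graph: a move reads and removes $a_n$, shifts $a_1\cdots a_{n-1}$ to the right, and prepends a new bit $c$, with the prisoner as mover exactly when $a_n=1$. The prisoner wins iff he can force the token to the sink $0^n$. To show he always can, I aim to prove that the warden's \emph{survival set} --- the configurations from which the warden can avoid $0^n$ forever --- is empty. Concretely, I would produce a rank function $\rho$ on configurations with $\rho(0^n)=0$ such that from every warden-controlled configuration \emph{both} successors have strictly smaller rank, and from every prisoner-controlled configuration \emph{some} successor has strictly smaller rank. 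Such a $\rho$ forces the rank to drop on every single move no matter what the warden does, so the game must reach $0^n$ in finitely many (and, I expect, at most $2^n-1$) steps. Equivalently, one runs the backward attractor induction outward from $0^n$ and argues that it exhausts every vertex.

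A useful landmark to orient the construction: from the all-heads configuration the prisoner wins in exactly $n$ further moves, since every coin is then heads and he simply flips each one as it arrives --- so large blocks of heads are in fact \emph{good} for the prisoner, which cuts against first intuition. The hard part, and the reason every naive potential fails, is the warden's move: transferring a tails coin, he may plant a fresh head at the far left, i.e.\ set the new leading bit to $1$. This single move simultaneously inflates the configuration's binary value, the number of heads, and the length of the trailing run of tails, so none of these is monotone. The crux is therefore to show that such an injected head cannot help the warden: it must travel the full length of the row before the prisoner disposes of it, and (this is where I expect the de Bruijn structure to do the real work) the $2^n$ configurations line up along essentially a single descending path, leaving the warden no cycle in which to hide. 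Verifying that this ordering \emph{is} the required rank function $\rho$ is the main obstacle; I would treat the warden-controlled vertices (those with $a_n=0$), where only one successor need remain in the survival set, as the delicate case, since it is precisely there that one-step extremal arguments break down.
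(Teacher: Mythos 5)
Your reduction to the shift-register picture is correct, and your requirement on $\rho$ (all successors drop at warden-controlled vertices, some successor drops at prisoner-controlled vertices) is indeed equivalent to the theorem. But that is precisely the problem: such a $\rho$ exists if and only if the prisoner wins from every position (take $\rho$ to be the remoteness function), so demanding it is a restatement of the goal, not a step toward it. Your proposal then stops exactly where the work begins --- you say outright that verifying the candidate ordering is ``the main obstacle'' and never construct $\rho$, never exhibit a strategy, and never show the survival set is empty. The gesture toward the single-chain/de Bruijn structure cannot fill the hole, because that structure is a much stronger fact (it is the paper's later, harder theorem about there being exactly one position of each remoteness), and proving it presupposes knowing the game is a win for the prisoner in the first place.

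The missing idea is to abandon per-move monotonicity, which you correctly observe fails for every naive potential, and settle for per-cycle monotonicity of a crude potential combined with an explicit strategy. The paper's proof does exactly this: encode tails as $1$, heads as $0$, and have the prisoner flip every $0$ to $1$ until the warden first flips a $1$ to $0$, after which the prisoner stops flipping for the remainder of that block of $n$ moves. After $n$ moves every coin is back in its original slot, and the leftmost coin whose value changed was changed by the warden from $1$ to $0$; hence either the prisoner has already won or the binary value of the configuration has strictly decreased over the cycle. Since that value cannot decrease forever, the prisoner wins in at most $n2^n$ moves. This sidesteps entirely the need to understand the game tree, whereas your plan quietly requires solving the game before proving it can be won.
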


\begin{proof}
We can represent any possible position as a number in binary, using a 0 to represent heads and a 1 to represent tails. For example, the sequence THTTH would be represented as 10110, which is 22 in base ten.
  
Assume there are $n$ coins on the table. After $n$ moves, each coin will be back in its original position (though possibly flipped). We will show that the prisoner has a strategy that ensures that, after $n$ moves, either the coins will all be showing 1's, or the coins will be showing a smaller binary number than before. But these binary numbers can't decrease forever, so eventually, the prisoner will get all 1's and win.
  
Here's the strategy for the prisoner: each round of $n$ moves, flip all 0's into 1's, until the warden flips a 1 into a 0. (If he never does, then all the coins will be showing 1, and the prisoner wins.) Thereafter, leave all 0's as 0's. If the prisoner follows this strategy, then each cycle of $n$ moves, the leftmost coin that is changed will be a coin that the warden flipped from a 1 to a 0, thus producing a smaller binary number.

For example: say we have the position 0101001, which is the binary representation of 41. Say the warden transfers the rightmost 1 without flipping it, producing $1|010100$. You then try for the win, flipping the next two 0's into 1's; that yields $111|0101$. The warden might then spoil your chance for a win by flipping the next 1 into a 0, producing $0111|010$. You then stop flipping 0's into 1's; three moves later, the position is 0100111, assuming the warden doesn't flip the next 1 into a 0. This is the binary representation of 39, which is smaller than the original number. You then follow the same strategy again to produce an even smaller binary number; this process cannot continue indefinitely, so you will eventually get all 1's and win.  
\end{proof}

Using this strategy, you can ensure that you gain your freedom after $n 2^n$ moves; the game can last for no more than $2^n$ cycles, and each cycle consists of $n$ moves.

However, this is not the best that can be done. Consider the starting sequence 01110, which is the binary representation of 14. If you blindly follow the basic strategy, it could take as many as 75 moves for you to escape: 14 sets of 5 moves to reduce the position to 00000, and then 5 more moves to turn the position into 11111. But if you simply transfer the rightmost 0 without flipping it, the position will then be 00111, the binary representation for 7. You can now start following the basic strategy, and win in at most 40 more moves. Clearly, the basic strategy can be greatly improved.

So, what are the optimal strategies for you and the warden? We will get to that eventually. First, let's generalize this game.

\section{The Warden Takes to Dice}

The warden got tired of playing this game with coins, so to spice things up, he decided to play with dice instead. He came up with a variation which uses $m$-sided dice, for any positive integer $m$.

The sides of each die contain the integers from 0 to $m-1$. The goal for the prisoner is to get the dice to all show the highest possible value, $m-1$. For example, if $m=6$, then the numbers on each die range from 0 to 5, instead of the traditional 1 to 6.

\begin{center}
  \includegraphics[height=3cm]{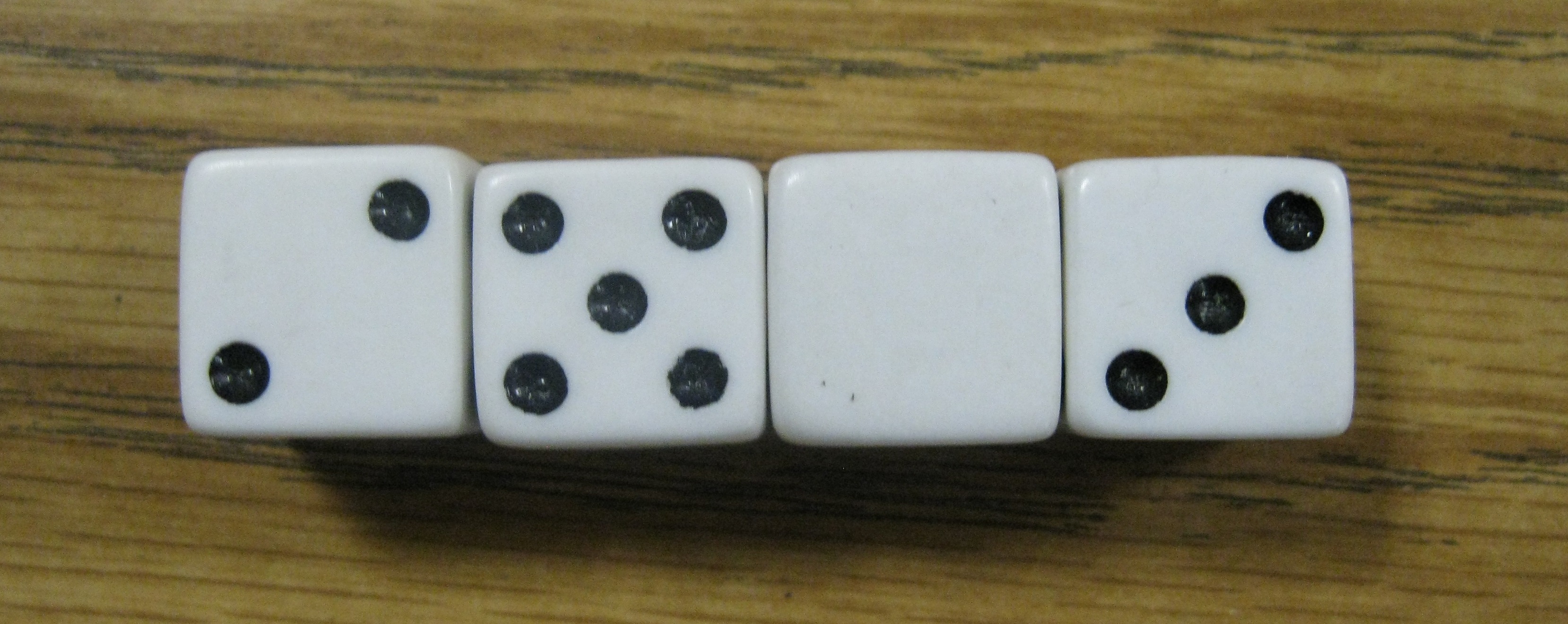}
\end{center}

The warden needed some time to come up with interesting rules for this variation. He wanted to set things up so that, the higher the number on the die to be transferred, the more control the warden would have over what number the die would be changed to. The rule he decided on: each turn, the warden has the option of transferring the rightmost die to the far left, and decreasing the value on that die. If the warden doesn't want to do this (or can't, because the die is showing 0), then the warden may pass; then the prisoner must transfer the rightmost die, and has the option of increasing the value on that die.

For example, starting from the position 2503, the warden may transfer the 3 himself, changing the value on that die to a 0, 1, or 2. Or the warden may pass; the prisoner then must transfer the 3, and may either leave it as a 3 or change it to a 4 or a 5.

Let's say the warden transfers the 3 himself, changing it to a 2. The position then is 2250. Now the warden cannot lower the value on the rightmost die, so he has no choice but to pass; the prisoner then transfers the 0, and can change the value on that die to anything he wishes. And so on.

Note that this rule generalizes the rule for the coin-flipping case. Each coin can show either a 0 (heads) or a 1 (tails). If a coin to be transferred shows a 1, then the warden can either change that coin to a 0, or he can pass... but then the prisoner would be forced to leave that 1 as a 1. So if the rightmost coin shows a 1, the warden has full control. But if a coin to be transferred shows a 0, then the warden is forced to pass, which means the prisoner has full control.

Under this ruleset, we can easily show:

\begin{theorem}[Basic Win Strategy, Revisited]
  If the game is played with $n$ $m$-sided dice, then regardless of the starting setup, there is a strategy for the prisoner that ensures that all dice are showing the largest value ($m-1$) in a finite number of moves.
\end{theorem}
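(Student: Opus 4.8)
The plan is to mimic the proof of the original Basic Win Strategy, replacing binary with base $m$. First I would encode each position as the base-$m$ integer $N = \sum_{j=1}^{n} d_j\, m^{\,n-j}$, where $d_1,\dots,d_n$ are the values on the dice read left to right; the winning position (all dice showing $m-1$) is then the unique maximum $m^n - 1$. As in the coin case, I would track $N$ over a full cycle of $n$ moves, at the end of which every die has returned to its starting position (possibly with a changed value). The object is to exhibit a prisoner strategy guaranteeing that each cycle either wins outright or strictly decreases $N$; since $N$ is a nonnegative integer it cannot decrease forever, so only finitely many non-winning cycles can occur, and the prisoner must eventually win (in at most on the order of $n\,m^n$ moves).

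The strategy to propose is the direct analogue of the coin strategy: within each cycle the prisoner sets every die he transfers to the maximal value $m-1$ (permissible, since he may raise a die to anything up to $m-1$), and he keeps doing so until the first moment the warden chooses to transfer-and-decrease a die; from that point on in the cycle the prisoner leaves every die he transfers unchanged. If the warden never decreases a die during the cycle, then every die is maxed out to $m-1$ and the prisoner has already won, so we may assume the warden's first decrease occurs on some move $k^*$.

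The heart of the argument is the bookkeeping that links move order to final position. I would record that in a single cycle each die is transferred exactly once, the die starting in position $j$ being transferred on move $n-j+1$ and then returning to position $j$; hence the final digit in position $j$ is exactly the value assigned on that move. Consequently the dice handled before move $k^*$ occupy the least significant positions and all become $m-1$; the die the warden decreases on move $k^*$ ends in position $j^* = n-k^*+1$ with a new value strictly below its old one; and every die handled after move $k^*$ lies in a more significant position and has its value either lowered by the warden or left fixed, so its final digit is at most its original digit. Thus for every position $j \le j^*$ the new digit is $\le$ the old digit, with strict inequality at $j^*$, so the most significant position at which the old and new numbers differ is one where the new digit is smaller; therefore $N$ strictly decreases, which completes the monovariant.

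I expect the main obstacle to be exactly this positional bookkeeping, namely pinning down that the leftmost (most significant) digit changed during a cycle is necessarily one the warden decreased rather than one the prisoner raised. The increases the prisoner makes all land in positions less significant than the warden's first decrease, and that is what forces the net change in $N$ to be negative; making the ordering precise, and checking the boundary cases where the warden decreases on the very first or the very last move of a cycle, is the only delicate point and the step I would write out most carefully.
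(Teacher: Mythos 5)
Your proposal is correct and follows essentially the same route as the paper: encode positions as base-$m$ numbers, have the prisoner max out every die he transfers until the warden's first decrease and then freeze, and argue that each cycle of $n$ moves either wins or strictly decreases the number, which cannot happen forever. Your positional bookkeeping (the die in position $j$ is handled on move $n-j+1$, so all of the prisoner's increases land in positions less significant than the warden's first decrease) is exactly the justification the paper leaves implicit when it asserts the new position ``will show a smaller base $m$ number than before.''
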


The proof is very similar to the proof for the game with coins. The set of dice can be treated as showing a number in base $m$. Each cycle of $n$ moves, the prisoner changes every die to $m-1$ until the warden decreases the number on a die. For the rest of the cycle, the prisoner leaves all dice alone. Following this strategy, the dice will either be reset to all $(m-1)$'s, or will show a smaller base $m$ number than before. This process can last only a finite number of cycles, so the prisoner wins in a finite number of moves.

But how can we find a strategy for the prisoner to follow that guarantees freedom in as few moves as possible? To find the pattern, let's work through the case where $m=3$ and $n=3$. We will try to find the \textit{remoteness} of each of the 27 positions; the remoteness is the number of moves the game will last from that position, assuming the winning player (the prisoner) tries to win as quickly as possible, and the losing player (the warden) tries to delay defeat for as long as possible. The term \textit{remoteness} was first used in this way by Cedric Smith (\cite{CABS}, p. 56).

First, some notation: for the rest of the paper, we will use Greek letters ($\alpha$, $\beta$, etc.) to refer to positions in the game. For example: if $n=m=3$, one possible position is $\alpha=102$. Given any position $\alpha$, we will use the notation $r(\alpha)$ for the remoteness of $\alpha$.

Note: if $n=m=3$, the only position with remoteness 0 is the goal position, 222. But we can also consider 222 as a starting position, and say that the prisoner wins once 222 is reached once again. As a starting position, 222 has nonzero remoteness; we will shortly determine what that remoteness is. In general, if $\alpha$ is the goal position (consisting of all $m-1$'s), we will write $r(\alpha)$ to denote the remoteness of $\alpha$ treated as a starting position. (So, $r(\alpha)>0$.)

If $n=m=3$, which positions have remoteness 1? From any such position, it must be possible to move to a position of remoteness 0 (namely, 222) in just one move. So any position of remoteness 1 must have the form 22X. But the warden can prevent a win in one move from either 221 or 222; he can move to 022. So 220 is the only position of remoteness 1; the warden is forced to pass, and the prisoner can move to 222.

Which positions have remoteness 2? Any such position must have the form 20X, so that it is possible to move to a position of remoteness 1 (namely, 220) in just one move. But again, the warden can move to something other than 220 if the position is 201 or 202. So the only position of remoteness 2 is 200. Similarly, the only position of remoteness 3 is 000.

But what about remoteness 4? Any such position must be of the form 00X, but not 000, because that position has remoteness 3. From 002, the warden can avoid moving to 000; he can move to 100 instead. But from 001, the warden's best option is to move to 000; if the warden passes instead, then the prisoner can move to 200, which has a smaller remoteness than 000. So indeed, 001 has remoteness 4, and it is the only position of remoteness 4.

Continuing this reasoning, we can find the remoteness of every possible position. It turns out that there is exactly one position of each remoteness, from 0 to 27. The complete list is as follows, starting from remoteness 0:

\begin{center}
222, 220, 200, 000, 001, 010, 100, 002, 020, 201, 011, 110, 101, 012,

120, 202, 021, 210, 102, 022, 221, 211, 111, 112, 121, 212, 122, 222 \,
\end{center}

Is it just a coincidence that we never have two positions with the same remoteness? Not at all; we will prove that this always happens, regardless of the number of dice or the number of sides on each die.

First of all, notice what happens when compare the remoteness of two positions that are identical except for the last digit. For instance, we can compare the positions 100, 101, and 102. Notice that $r(100)<r(101)<r(102)$. This actually holds true throughout the sequence; if two positions are identical except for the last digit, the one with the smaller final digit has the smaller remoteness. (This rule assumes that we treat the goal state as a starting position, not as an ending position. Here, the goal position is 222, and we have $r(220)<r(221)<r(222)$.)

\begin{theorem}[Larger Numbers Help the Warden]\label{largerhelpswarden}
  Consider the warden's game played with $n$ dice, each with $m$ sides. Consider two possible positions $\alpha a$ and $\alpha b$, where $\alpha$ is an arbitrary string of $n-1$ $m$-ary digits. Then if $a<b$, then $r(\alpha a)\le r(\alpha b)$.
\end{theorem}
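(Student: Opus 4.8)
The plan is to reduce the claim to a purely combinatorial monotonicity statement by first writing down the recursion that the game rules force on $r$. Fix the prefix $\alpha$ (of length $n-1$) and, for a last digit $c\in\{0,1,\dots,m-1\}$, write $f(c):=r(\alpha c)$; also write $g(d):=r(d\alpha)$ for the remoteness of the position obtained by transferring the rightmost die to the far left and setting its value to $d$. From $\alpha c$ the warden first decides whether to transfer-and-strictly-decrease (landing on $d\alpha$ for some $d<c$, possible only when $c>0$) or to pass, after which the prisoner transfers and may keep or increase the value (landing on $d\alpha$ for some $d\ge c$). Since the warden maximizes remoteness and the prisoner minimizes it, this yields
\[
f(c)=\max\Big(\ \max_{0\le d<c}\big(1+g(d)\big),\ \ \min_{c\le d\le m-1}\big(1+g(d)\big)\ \Big),
\]
where the first (warden) term is simply omitted when $c=0$.

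First I would argue that this recursion is exactly the minimax defining remoteness, checking the two readings of the goal. A position $d\alpha$ reached by a warden move can never be the goal, since a strict decrease forces $d<c\le m-1$, so the goal is reachable only by a prisoner move and contributes $1+0=1$; and when $\alpha c$ is itself the goal treated as a starting position, it is precisely the warden's decrease option (the first term) that prevents the prisoner from trivially re-winning. This justification is the step I expect to be the main obstacle, because remoteness must be shown well-defined despite cycles (the prisoner could, for instance, keep returning to $000$). This is exactly where the Basic Win Strategy is needed: it guarantees that a finite win is always available, so the minimum in the recursion is taken over a nonempty set of finite values and $f$ and $g$ are everywhere finite and well-defined.

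Once the recursion is in hand, the monotonicity is immediate and requires no induction. Treating $\alpha$ (hence $g$) as fixed, define $M(c):=\max_{0\le d<c}(1+g(d))$ and $P(c):=\min_{c\le d\le m-1}(1+g(d))$, so that $f(c)=\max(M(c),P(c))$. As $c$ increases, $M(c)$ is a maximum over the growing index set $\{0,\dots,c-1\}$, hence non-decreasing, while $P(c)$ is a minimum over the shrinking index set $\{c,\dots,m-1\}$, hence also non-decreasing; the pointwise maximum of two non-decreasing functions is non-decreasing. Therefore $a<b$ gives $f(a)\le f(b)$, i.e.\ $r(\alpha a)\le r(\alpha b)$. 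The boundary case $a=0$ is covered directly, since then $f(0)=P(0)\le P(b)\le\max(M(b),P(b))=f(b)$. The only real work, then, is formalizing the recursion; the monotonicity itself is a one-line observation about nested maxima and minima, valid for any fixed $g$.
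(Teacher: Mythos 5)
Your proof is correct, and it rests on the same underlying observation as the paper's argument, but the two are packaged quite differently. The paper gives a direct strategy-dominance argument, never writing down a recursion: every decrease option the warden has from $\alpha a$ is still available from $\alpha b$, and passing from $\alpha b$ is at least as good for the warden as passing from $\alpha a$ because it confines the prisoner to a smaller choice set; hence the warden can prolong the game from $\alpha b$ at least as long as from $\alpha a$. You instead establish the Bellman identity $f(c)=\max\bigl(M(c),P(c)\bigr)$ and note that $M$ (a max over a growing index set) and $P$ (a min over a shrinking index set) are both non-decreasing; these two monotonicity facts are precisely the paper's ``extra options can't hurt the warden,'' made formal. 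What your route buys is rigor where the paper has only prose: finiteness and well-definedness of remoteness via the Basic Win Strategy, and a correct treatment of the goal convention --- your observation that a warden move can never land on the goal (so the terminal value $0$ enters only through the prisoner's min) is exactly what makes the recursion literally true, and it also cleanly covers the case where $\alpha b$ is itself the goal treated as a starting position, matching the paper's convention $r(220)<r(221)<r(222)$. What it costs is that the recursion, which you rightly flag as the main obstacle, carries all the weight: justifying a value recursion on a game graph with cycles is real (if standard) machinery, whereas the paper's dominance comparison sidesteps it entirely by comparing what the warden can guarantee position-by-position --- at the price of remaining a sketch rather than a complete proof.
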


The reason: increasing the value on the rightmost die can only help the warden, because it gives the warden more options. If the rightmost die is showing $a$, then the warden can set that die to any value from 0 to $a-1$, or he can pass and let the prisoner choose any value $a$ or more. If the rightmost die is showing $b>a$, then the warden has the additional options of setting that die to a value from $a$ to $b-1$, or of passing and letting the prisoner choose a value $b$ or more. Those extra options can't hurt the warden, so the warden can keep the game going at least as long with a $b$ as the final digit as with an $a$ as the final digit.

However, this does not prove that $\alpha a$ will have a \textit{strictly smaller} remoteness than $\alpha b$. Could it be that there exist positions $\alpha a$ and $\alpha b$ (with $a<b$) with the same remoteness, so that the warden's extra options from $\alpha b$ don't help the warden? The next theorem proves that this can't happen.

\begin{theorem}[The Game Tree is a Single Chain] Consider the warden's game played with $n$ dice, each with $m$ sides. Then there is exactly one position of each remoteness from 0 to $m^n$, and the position with the largest remoteness $m^n$ is the position of all $m-1$'s.
\end{theorem}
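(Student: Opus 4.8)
The plan is to set up the exact recursion for remoteness and then run a strong induction on the remoteness value, showing that the positions line up into a single chain. Write a position as $\alpha c$ with $c$ its last digit. A move carries the rightmost die to the front, so $\alpha c$ becomes $d\alpha$, where the warden (who prolongs) may take any $d<c$, or may pass and let the prisoner (who finishes) take any $d\ge c$. Reading off the resulting remotenesses, and writing $G=(m-1)\cdots(m-1)$ for the goal, this gives
\[
r(\alpha c)\;=\;1+\max\!\Big(\max_{0\le d<c} r(d\alpha),\ \min_{c\le d\le m-1} r(d\alpha)\Big),
\]
where the empty maximum (at $c=0$, when the warden must pass) is discarded and $G$ counts as remoteness $0$ when it is reached. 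Two facts are immediate: the quantity in parentheses is always one of the values $r(d\alpha)$, and it is weakly increasing in $c$ (raising $c$ enlarges the set the warden maximizes over and shrinks the set the prisoner minimizes over). The latter is exactly the previous theorem restated, $r(\alpha a)\le r(\alpha b)$ for $a<b$.

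For the induction, suppose unique positions $P_0=G,P_1,\dots,P_k$ of remotenesses $0,1,\dots,k$ have been found. If $\beta$ has remoteness $k+1$ then its optimal move reaches a position of remoteness $k$, which by uniqueness is $P_k$; writing $P_k=d^{*}\alpha$, the only positions that can move to $P_k$ are the $m$ \emph{shifts} $\beta_c=\alpha c$ got by deleting the first digit of $P_k$ and appending a new last digit. By monotonicity $r(\beta_0)\le r(\beta_1)\le\cdots\le r(\beta_{m-1})$, so the set of last digits attaining remoteness $k+1$ is an interval, and the whole theorem reduces to showing this interval has length exactly one at every step.

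Conceptually, ``length one'' for all $k$ says that, for each fixed $\alpha$, the optimal-move map sends $\{\alpha c\}$ bijectively onto $\{d\alpha\}$, equivalently $\{\,r(\alpha c)-1:c\,\}=\{\,r(d\alpha):d\,\}$ as sets. Granting this bijection for every $\alpha$, the global optimal-move map $\sigma$ (sending each position to the one reached under optimal play) is a permutation of all $m^{n}$ positions. It drops remoteness by exactly $1$ at every position except $G$, where the value resets from $0$ (reached as a destination) to its large value as a starting position. Any cycle of $\sigma$ avoiding $G$ would force the remoteness to strictly decrease all the way around, which is impossible, so every cycle meets $G$ and $\sigma$ is a single $m^{n}$-cycle. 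That at once gives a unique position of each remoteness $0,1,\dots,m^{n}-1$, forces $r(G)=m^{n}$ as a starting position, and --- reading the prepended digits once around the cycle --- produces the de Bruijn sequence of the title.

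The hard part will be the uniqueness (``length one'') step, and I expect no purely local argument to suffice: the bijection is genuinely false for arbitrary value patterns (the abstract triple $(3,2,1)$ makes the displayed recursion repeat a value), so it must exploit the inductive fact that all smaller remotenesses are already uniquely placed. My attack is to assume two consecutive shifts tie, $r(\alpha c)=r(\alpha(c+1))=k+1$, so both move optimally to $P_k$, and to split on how $P_k$ is reached. When the prisoner's choice realizes the target for at least one of them ($d^{*}\ge c$), a short computation with the recursion, together with the uniqueness of the remoteness-$k$ position (only one $d$ has $r(d\alpha)=k$), rules the tie out. The stubborn case is when the warden's move realizes both ties, $d^{*}<c$: here the tie is locally consistent and is the real obstacle. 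To break it I plan to invoke the global structure, viewing $P_0,\dots,P_k$ as a trail in the de Bruijn graph on $(n-1)$-strings, so that the already-placed members of the prefix class $\{\alpha c\}$ form an initial segment pinned down by the placed members of the suffix class $\{d\alpha\}$, and to derive a contradiction from the interaction of these two classes, most likely through a minimal-counterexample argument in $k$.
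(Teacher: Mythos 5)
Your framework is sound, and most of what you assert is verifiably correct: the recursion, the monotonicity restatement, the observation that any position of remoteness $k+1$ must be a shift $\alpha c$ of $P_k$, the interval structure of the tied last digits, and the permutation/single-cycle finish that, once uniqueness is granted, delivers the count, $r(G)=m^n$, and the de Bruijn property in one stroke. Your ``short computation'' claim for the case $d^*\ge c$ also checks out: if $d^*\ge c+1$, then at $\alpha c$ the warden's optimum $k$ cannot come from his own moves (no $d<c$ has $r(d\alpha)=k$ by uniqueness of $P_k$), so $\min_{d\ge c}r(d\alpha)=k$ and hence $r(c\alpha)\ge k$; but the tie at $\alpha(c+1)$ gives the warden the option $d=c$, forcing $r(c\alpha)\le k$; so $r(c\alpha)=k$ with $c\ne d^*$, contradicting uniqueness of $P_k$. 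The sub-case $d^*=c$ is similar. The genuine gap is exactly where you flag it: for the stubborn case $d^*<c$ you offer only an intention (``invoke the global structure\dots most likely through a minimal-counterexample argument''), not an argument, and as your own $(3,2,1)$ example shows, no manipulation of the recursion together with uniqueness of $P_k$ alone can close it. Since that case is the entire content of the theorem, the proposal as it stands does not prove it.

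The missing idea --- the heart of the paper's proof --- is to widen the view from the two tied positions to \emph{all} of $\alpha 0,\alpha 1,\dots,\alpha c$, letting $c_j\alpha$ denote the optimal target of $\alpha j$ (so $c_c\alpha=P_k$). The inductive chain structure plus monotonicity gives the key global fact: every position $e\alpha$ with $r(e\alpha)\le k$ equals some $c_j\alpha$ with $j\le c$. (For $r(e\alpha)=k$ this is the definition of $c_c$; for $r(e\alpha)<k$, the unique position of remoteness $r(e\alpha)+1$ can only be a shift $\alpha j$, monotonicity forces $j<c$, and then $c_j\alpha=e\alpha$.) Now pigeonhole on the digits: either some $s\le c$ is absent from $\{c_0,\dots,c_c\}$, in which case the warden at $\alpha(c+1)$ legally moves to $s\alpha$; or $\{c_0,\dots,c_c\}=\{0,\dots,c\}$, in which case the warden passes and the prisoner is forced to produce $d\alpha$ with $d\ge c+1$, and no such $d$ lies in $\{c_0,\dots,c_c\}$. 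Either way the next position avoids every $c_j\alpha$, hence has remoteness at least $k+1$ by the key fact, so $r(\alpha(c+1))\ge k+2$, the desired contradiction. Note that this argument requires no split on $d^*$ at all (it subsumes your easy case), and the only global input it needs is the displayed fact --- not a trail decomposition of the de Bruijn graph nor a second minimal-counterexample induction. Without this step, or an equivalent one, your induction cannot advance past the first tie.
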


\begin{proof}
  First, consider a positive integer $x$ such that there are no positions of remoteness $x$. Then there won't be any positions of remoteness $x+1$ either; from a position of remoteness $x+1$, the next move would have to be to a position of remoteness $x$ (of which there are none), assuming both players play properly. Continuing this, there won't be any positions of remoteness $x+2$, and so on. So if $x$ is the smallest positive integer where there are no positions of remoteness $x$, then there won't be any positions of remoteness $y>x$.
  
  Second, we need to show that for each positive integer $x$, if there is a position of remoteness $x$, then there is only one such position. This, coupled with the paragraph above, will show that there is exactly one position of each remoteness from 0 to $m^n$.
  
  Assume, to the contrary, that there are two positions with the same remoteness. Let $x$ be the smallest positive integer such that there are two positions of remoteness $x$. Then there is exactly one position of each remoteness from 0 to $x-1$. Let $c\alpha$ be the position of remoteness $x-1$, where $\alpha$ is a string of length $n-1$. Then the two positions of remoteness $x$ have the form $\alpha a$ and $\alpha b$, where $a<b$. We must in fact have $b=a+1$, and the positions $\alpha 0,\alpha 1,\ldots \alpha(a-1)$ all have smaller remoteness than $\alpha a$.
  
  Consider the optimal moves from the positions $\alpha 0,\alpha 1,\ldots \alpha a$; let's call the resulting positions $c_0\alpha,c_1\alpha,\ldots c_a\alpha=c\alpha$. Since $\alpha(a+1)$ has the same remoteness as $\alpha a$, the optimal move from $\alpha(a+1)$ should be the same as from $\alpha a$: namely, to $c\alpha$. But that is not the case; the warden has a way to avoid moving from $\alpha(a+1)$ to any of the positions $c_0\alpha,c_1\alpha,\ldots c_a\alpha$. Namely:
  
  \begin{itemize}
    \item
    If there is a nonnegative integer $s\le a$ that is not on the list of $c_0,c_1,\ldots c_a$, then the warden can move to $s\alpha$.
    \item
    If the numbers $c_0,c_1,\ldots c_a$ are exactly the numbers from 0 to $a$ in some order, then the warden may pass, forcing the prisoner to move to a position other than $c_0\alpha,c_1\alpha,\ldots c_a\alpha$.
  \end{itemize}
  
  Thus, from $\alpha(a+1)$, the warden can make sure that the next move is not to a position of remoteness at most $x-1$. So, $\alpha(a+1)$ is not a position of remoteness $x$, contradiction. Thus, there cannot be two positions of the same remoteness; there is exactly one position of each remoteness from 0 to $m^n$.
  
  Lastly, we need to show that if $\alpha$ is the position of all $m-1$'s, then $\alpha$ (treated as a starting position) is the position of greatest remoteness. This is true because, if there was a position $\beta$ where $r(\beta)=1+r(\alpha)$, then the optimal move from $\beta$ would be to $\alpha$. But that would be a \textbf{winning} move for the prisoner, and we would have $r(\beta)=1$, not $r(\beta)=1+r(\alpha)>1$. So this cannot happen; $\alpha$ (as a starting position) must be the position of greatest remoteness.
\end{proof}

Let's now go back to game ``tree'' for the game with $n=3$ dice, each with $m=3$ sides:

\begin{center}
222, 220, 200, 000, 001, 010, 100, 002, 020, 201, 011, 110, 101, 012,

120, 202, 021, 210, 102, 022, 221, 211, 111, 112, 121, 212, 122, 222 \,
\end{center}

There's a lot of redundant information in this list, because the last $n-1$ digits of each position match the first $n-1$ digits of the next position. To save space, we can just write down the new digit obtained at each step. We thus get the following:

\begin{center}
222000100201101202102211121222
\end{center}

But we can do a bit better here. Remember, the goal position and the position with largest remoteness are the same: 222. Because of that, we can think of this string as a loop of digits; the sequence starts with 000, and continues to the 222, but then loops back around to 000. I'll write the sequence like this:

\begin{center}
(222)000100201101202102211121222
\end{center}

This is a loop of 27 digits, not a string of 30 digits; the 222 is repeated for visual clarity (otherwise, it is difficult to see where 220 and 200 occur in the loop). The entire optimal strategy for the game is summarized in this loop of digits; for each turn of the game, the optimal move is to shift one step to the left in this sequence. Because the prisoner can win from every position, every possible combination will appear exactly once in this sequence. Thus, this is a \textit{de Bruijn sequence}.

\begin{definition}
  Given positive integers $m$ and $n$, an \textbf{$m$-ary de Bruijn sequence of rank $n$} is a sequence of $m^n$ numbers from the set $\{0,\ldots,m-1\}$, such that each possible string of $n$ numbers appears exactly once in the sequence (treated as a loop).
\end{definition}

As a further example, here's the de Bruijn sequence we would obtain from the game with four coins; it's a 2-ary de Bruijn sequence of rank 4.

\begin{center}
(1111)0000100110101111
\end{center}

In general, there are many $m$-ary de Bruijn sequences of rank $n$. Here are a few more 2-ary de Bruijn sequences of rank 4 (these are unrelated to the warden's game):

\begin{center}
(1011)0000100111101011

(1101)0000101100111101

(1111)0000101101001111
\end{center}

There are actually sixteen different 2-ary de Bruijn sequences of rank 4. Each of the other twelve can be obtained from one of these four, by reversing the sequence and/or replacing 0s with 1s and vice versa (\cite{dB}, p. 758).

Note: since de Bruijn sequences are loops of digits, it is possible to choose any digit in such a sequence to be the ``first digit''. I have intentionally written these four sequences so that they all start with four 0's, ignoring the parenthesized digits (which are there just for visual clarity). Given that, which one of these four sequences represents the smallest binary number? It's the one that comes from the warden's game. We say that the warden's de Bruijn sequence is ``lexicographically smaller'' than the others; I've listed the four sequences ``alphabetically'' above (where the ``alphabet'' is $\{0,1\}$), and the warden's de Bruijn sequence is first out of the four.

In fact, we could check all sixteen possible de Bruijn sequences, and the warden's de Bruijn sequence would be lexicographically smaller than all the others. And this is to be expected; when we construct the warden's de Bruijn sequence, we always choose the smallest digit we can at each step (from Theorem \ref{largerhelpswarden}). Thus:

\begin{theorem}
  Given positive integers $m$ and $n$, the sequence representing the game tree for the game with $n$ $m$-sided dice is the \textbf{lexicographically minimal} $m$-ary de Bruijn sequence of rank $n$.
\end{theorem}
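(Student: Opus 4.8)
The plan is to prove the statement by a direct lexicographic comparison, rather than by appealing to a greedy ``append the smallest legal digit'' construction (which, it is worth noting, actually fails here: for $m=n=3$ that naive greedy gets stuck after $000100200$, so genuine lookahead is needed and the remoteness structure supplies it). Throughout I normalize every $m$-ary de Bruijn sequence of rank $n$ so that it begins with the unique window $0^n$; lexicographic comparison is then performed on these length-$m^n$ strings. I would first record the dictionary between the warden's sequence $W=w_0w_1\cdots w_{m^n-1}$ and the remoteness structure: in this normalization the $n$-window beginning at index $i$ is exactly the position of remoteness $n+i$ (indices mod $m^n$), so that the ``spine'' positions $(m-1)^{n-k}0^k$ (remoteness $k$, for $0\le k\le n-1$) sit at the very end, at indices $m^n-n,\dots,m^n-1$, immediately before the $0^n$ window wraps back to index $0$. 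Consequently the window $\beta w_t$ beginning at index $t-n+1$ is the position of remoteness $t+1$.

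Next I would take an arbitrary de Bruijn sequence $V=v_0v_1\cdots$ (also normalized to start with $0^n$), assume $V\neq W$, let $t$ be the first index where they differ, and argue by contradiction that the warden's digit cannot be the larger one; that is, I assume $w_t>v_t$ and derive a contradiction, which forces $w_t<v_t$ and hence $W<V$. Write $\beta=w_{t-n+1}\cdots w_{t-1}=v_{t-n+1}\cdots v_{t-1}$ (the two sequences agree before $t$, and since both open with $0^n$ we have $t\ge n$). Applying Theorem \ref{largerhelpswarden} together with the single-chain theorem, which upgrades $\le$ to a strict inequality since all remotenesses are distinct, $v_t<w_t$ gives $r(\beta v_t)<r(\beta w_t)=t+1$, so $r(\beta v_t)\le t$. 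By the dictionary above, a window of remoteness $\le t$ occurs in $W$ only at an index in $\{0,\dots,t-n\}$ or among the spine indices $\{m^n-n,\dots,m^n-1\}$.

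Then I would split into these two cases. In the first, $\beta v_t$ occurs in $W$ at some index $i^*\le t-n$; but $V$ and $W$ have identical windows at all indices $\le t-n$ (these depend only on digits through index $t-1$), so $V$ would contain $\beta v_t$ both at $i^*$ and at $t-n+1$, contradicting that $V$ is a de Bruijn sequence. In the second case $\beta v_t$ is a spine position $(m-1)^{n-j}0^j$; the subcase $j=0$ is impossible because it forces $v_t=m-1\ge w_t$, so $1\le j\le n-1$ and $\beta v_t$ carries a block of $n-j\ge 1$ leading copies of $m-1$.

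The crux is finishing this second case, and here I would use a global feature of every de Bruijn sequence: because $(m-1)^n$ occurs exactly once, each sequence contains exactly one maximal run of the symbol $m-1$, and that run has length exactly $n$. The $n-j$ leading maxima of the window $\beta v_t$ at index $t-n+1$ must lie inside $V$'s unique max-run, which therefore ends at index $t-j\le t-1$ and occupies the $n$ indices $t-j-n+1,\dots,t-j$. All of these lie in the region $\{0,\dots,t-1\}$ where $V$ agrees with $W$ (any wrap onto index $0$ is excluded since $v_0=0\neq m-1$), so $W$ too has $n$ consecutive maxima there; but $W$'s only max-run is its spine run at indices $m^n-n,\dots,m^n-1$, forcing $t-j=m^n-1$ and hence $t=m^n-1+j\ge m^n$, impossible for an index $t\le m^n-1$. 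This contradiction eliminates the second case, completes the comparison, and shows $W$ is lexicographically minimal. I expect this max-run bookkeeping (the wraparound case) to be the only genuinely delicate point; the first case and the reduction via Theorem \ref{largerhelpswarden} are routine.
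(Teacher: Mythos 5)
Your framework is sound, and it is a genuine attempt to make rigorous what the paper itself only waves at: the normalization, the remoteness dictionary, the strictness upgrade of Theorem~\ref{largerhelpswarden} via the single-chain theorem, and your first case (a window repeated at two indices of $V$) are all correct. The gap is in your second case, which sits exactly at the point you yourself call the crux. The lemma you invoke --- ``each de Bruijn sequence contains exactly one maximal run of the symbol $m-1$, and that run has length exactly $n$'' --- is false: uniqueness of the window $(m-1)^n$ only shows there is exactly one maximal run of length \emph{at least} $n$ (and that it has length exactly $n$); shorter maximal runs of $m-1$'s occur freely. The paper's own example $0000100110101111$ ($m=2$, $n=4$) has maximal runs of $1$'s of lengths $1,2,1,4$. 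Because of this, the step ``the $n-j$ leading maxima of $\beta v_t$ must lie inside $V$'s unique max-run'' has no justification when $1\le j\le n-1$: those $n-j<n$ maxima can form a short maximal run of their own. Concretely, take $m=n=3$, so $W$ is the loop $000100201101202102211121222$, and take $t=19$: then $\beta=w_{17}w_{18}=22$, $w_{19}=1$, and the hypothetical $V$ has $v_{19}=0$, so $\beta v_t=220$ and $j=1$. Since $v_{16}=w_{16}=0$ and $v_{19}=0$, the two $2$'s at indices $17,18$ form a maximal run of length $2$; your argument would force $v_{16}=v_{17}=v_{18}=2$, contradicting the known digit $v_{16}=0$, so no valid contradiction is reached and the case remains open.

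This second case is not a technicality: it is precisely the phenomenon that makes the naive prefer-smallest greedy get stuck (your own observation), and it is also the point glossed over by the paper's one-sentence justification. It can be closed, but by counting rather than by run lengths. In the instance above, $V$'s windows at indices $0,\dots,16$ already include all $m^{n-1}=9$ windows that begin with $0$ (they occur at exactly the indices carrying digit $0$), while digits $t-j+1,\dots,t$ of $V$ are all $0$, so $V$'s window at index $t-j+1=19$ would begin with $0$ and would have to repeat one of them --- contradiction. To run this argument in general you need the game-theoretic fact that whenever $(m-1)^{n-j}0^{j-1}w_t$ (with $w_t\ge 1$) has remoteness $t+1$, every position beginning with $0^j$ has remoteness at most $t$, so that all $m^{n-j}$ windows beginning with $0^j$ already occur in the common prefix at indices $\le t-n$. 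That fact is true, but it does not follow from Theorem~\ref{largerhelpswarden} alone --- raising a non-final digit can \emph{lower} remoteness, e.g.\ $r(000)=3$ while $r(200)=2$ --- so it needs its own proof. As written, your proof has a genuine hole at its central step.
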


Note: this result, in the case where $m=2$, was first proven by Gera Weiss (\cite{GW}, p. 4645).

Lexicographically minimal de Bruijn sequences are famous mathematical structures; they are sometimes called ``granddaddy'' de Bruijn sequences. Various methods have been given for constructing them. There are fast algorithms for finding the location of a particular string in a lexicographically minimal de Bruijn sequence (\cite{KRR}), but for some other de Bruijn sequences, there are algorithms that are even faster (\cite{MEP},\cite{JT}).

\section{The Warden Changes the Goal}

The warden was moderately satisfied with his game, but he was getting tired of having to use so many different kinds of dice. (Also, it was hard to find dice with certain numbers of sides; for example, 5-sided dice and 7-sided dice are pretty hard to come by.) So he decided to have a whiteboard installed in his office. This made the game much easier to play; the warden would write a starting set of numbers on the board, and the players would take turns erasing the rightmost number and writing a new number on the far left.

\begin{center}
  \includegraphics[height=3cm]{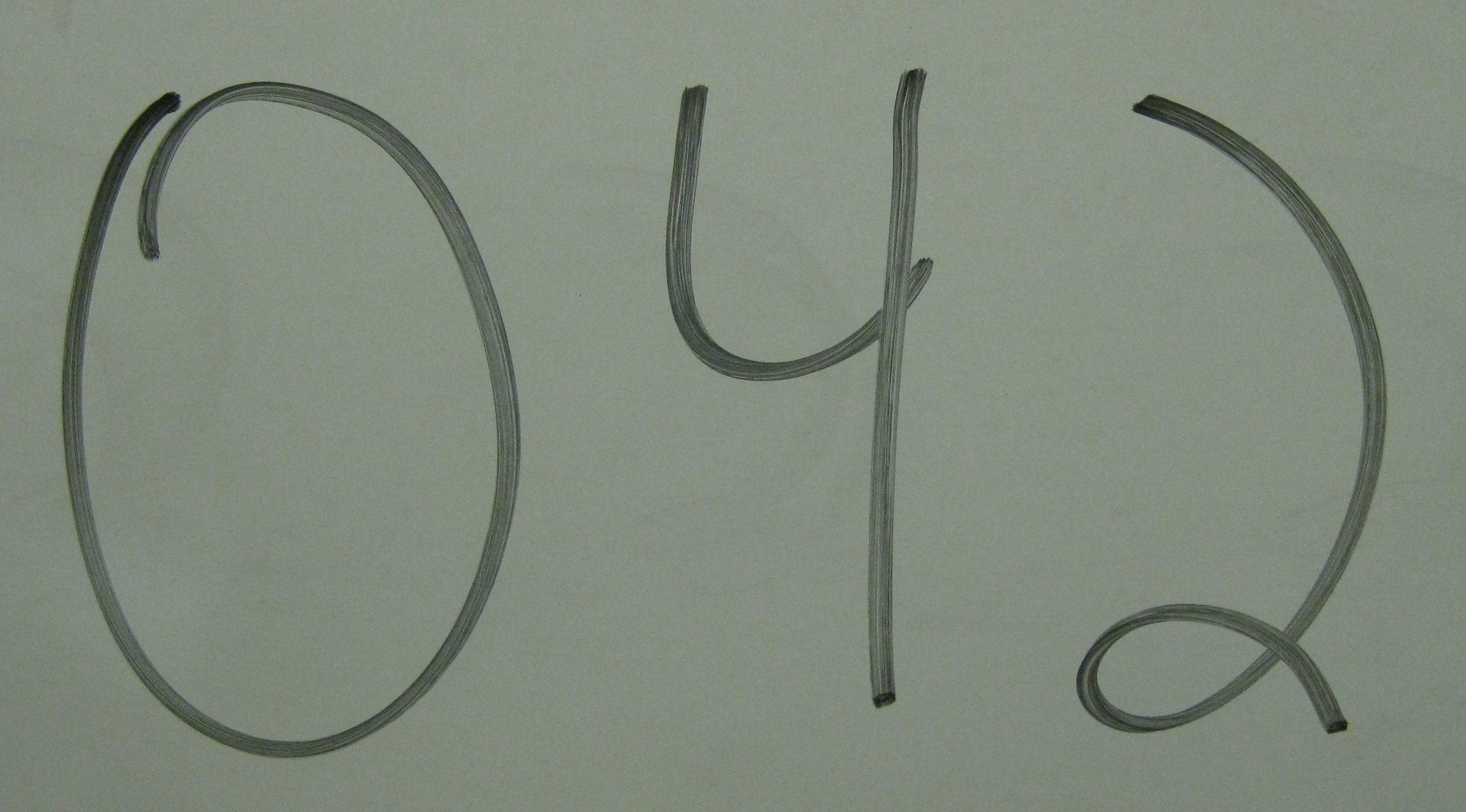}
\end{center}

Besides eliminating the need for dice, this allowed the warden to add more variety to the game; the prisoner's goal could now be \textbf{any} string of nonnegative integers. For example, a game could be played with three numbers, and the prisoner's goal could be to reach the position 314.

Given a particular goal position, from which positions can the prisoner eventually reach the goal, assuming optimal play? We have the following result:

\begin{theorem}
  Let $g_1 g_2 \ldots g_n$ be the goal position, where each $g_i$ is a nonnegative integer. The prisoner can reach the goal position from $x_1 x_2 \ldots x_n$ if and only if the following is true: there exists a rotation $y_1 y_2 \ldots y_n=x_k \ldots x_n x_1 \ldots x_{k-1}$ of the current position, such that $y_i\le g_i$ for all $i$.
\end{theorem}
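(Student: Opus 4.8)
The plan is to prove the two implications separately, and throughout to exploit that the property ``some rotation of the position is entrywise $\le g$'' depends only on the cyclic sequence of the position, not on where the window happens to start: any two rotations of a position have the same set of rotations, so this property is rotation--invariant. Call a position \emph{fitting} if it has such a rotation and \emph{unfitting} otherwise. I will also lean on the move dictionary: a warden move strictly lowers the transferred (rightmost) entry, while a pass hands the prisoner a move that can only raise or preserve that entry, and the warden chooses each turn whether to lower or to pass. Note that the goal $g$ is itself fitting, since $g_i \le g_i$ for all $i$.

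For the ``only if'' direction I would prove the contrapositive using an extremely lazy warden: from an unfitting position, let the warden simply pass on every turn, which he is always permitted to do. Then every move belongs to the prisoner, and each such move can only raise or preserve the transferred entry, so no entry ever decreases. Raising a single entry of the cyclic sequence can only destroy, never create, an alignment that sits entrywise under $g$; hence the number of such alignments never increases, and an unfitting position (which has none) stays unfitting forever. Since $g$ is fitting, the window can never equal $g$, so the prisoner never wins. This matches the intuition behind Theorem~\ref{largerhelpswarden}: large entries only help the warden, and here the warden refuses to let the prisoner shrink them.

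For the ``if'' direction I would first establish that the prisoner can never be driven out of the fitting region and that this region is finite. Finiteness holds because in a fitting position every entry is at most $M := \max_i g_i$, so only finitely many positions occur. The region is a trap the warden cannot escape because a warden move only lowers an entry, which preserves the witnessing alignment, while on a pass the prisoner may answer by preserving the entry (a neutral rotation) or by raising it only as far as fittingness permits; either way the prisoner keeps the position fitting. Thus once the game is fitting it lives forever inside a finite set of fitting positions, and the whole task reduces to showing that the prisoner can force the \emph{exact} string $g$ rather than merely wandering among fitting positions.

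This last step is the crux, and I expect it, rather than either invariance claim, to be the main obstacle. The mechanism working for the prisoner is that the warden's only way to avoid $g$ is to keep lowering entries, but lowering entries manufactures small values and eventually $0$'s, and a $0$ on the right \emph{forces} the warden to pass, handing the prisoner a free raise toward a goal digit; so the warden's evasive moves create exactly the openings that defeat him. I would make this precise by assuming a nonempty ``evasion set'' $E$ of fitting positions from which the warden can postpone $g$ forever and deriving a contradiction. Because $E$ sits in the finite fitting region, perpetual evasion yields an eventually periodic play on which the warden must make at least one genuine lowering move (a purely neutral rotation would, under optimal prisoner play, either reach $g$ or be replaced by a raise), so the entry-sum strictly drops somewhere yet returns over the period, forcing a compensating prisoner raise that only the warden's own lowering could have enabled. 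The delicate point is that a one-parameter extremal choice (say an element of $E$ of minimal or maximal entry-sum) does not by itself close the argument, since $E$ could be closed under both neutral rotations and the forced raises; ruling this out --- equivalently, proving $E=\emptyset$ --- is where the real work lies, and I would attempt it either by the forced-pass accounting just described or, most cleanly, by transporting the single-chain argument of the preceding theorem to the finite fitting region, where the same analysis of the warden's extra options applies with the base case ``remoteness $0$'' now being the single position $g$.
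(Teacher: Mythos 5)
Your ``only if'' direction is sound and is exactly the paper's argument: the warden passes on every turn, so no entry ever decreases, an unfitting position stays unfitting, and since the goal is fitting it is never reached. That half needs no further work.

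The ``if'' direction, however, has a genuine gap --- one you candidly flag yourself. What you actually establish (the fitting region is finite, and the prisoner can keep play inside it) falls short of forcing the exact string $g$, and neither of your proposed ways to close the hole succeeds as stated. The evasion-set sketch is an accounting observation, not a contradiction: over a periodic cycle total lowering equals total raising, but nothing in that prevents a nonempty set $E$ of fitting positions closed under the dynamics, which is precisely what you concede. The fallback of ``transporting the single-chain argument'' is circular: the proof of that theorem concludes ``exactly one position of each remoteness'' only because every position is already known to have \emph{finite} remoteness, i.e., because winnability was proved first (the Basic Win Strategy); uniqueness-of-remoteness arguments cannot substitute for the winnability statement they presuppose, and in the arbitrary-goal setting the chain covers only the winnable positions, so knowing it covers all fitting positions is exactly the theorem at issue. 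What is missing is a terminating potential argument, and the paper supplies one with an explicit two-phase strategy: the prisoner first raises nothing until the witnessing rotation $y_1 y_2 \ldots y_n$ becomes the current position (warden reductions in the meantime only preserve $y_i \le g_i$); then, in each block of $n$ moves, the prisoner raises each transferred entry to its goal value until the warden reduces some entry, after which the prisoner raises nothing for the rest of the block. Each block either ends at $g$ or ends at a position lexicographically smaller, in the aligned frame, than the block's start; since lexicographic order on $n$-tuples of nonnegative integers is well-founded, the warden can spoil only finitely many blocks. That single decreasing-potential device --- a direct descendant of the Basic Win Strategy proof --- is the step your proposal lacks.
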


For example, if 314 is the goal, then the prisoner can force a win from the position 042. This is true because we can rotate 042 to produce 204, and each digit in that string is at most the corresponding digit in the goal string ($2\le 3$, $0\le 1$, $4\le 4$). However, if 314 is the goal, then the prisoner can't force a win from 402; none of the possible rotations (402, 024, and 240) will work.

\begin{proof}
  Assume first that there is no rotation $y_1 y_2 \ldots y_n$ of the current position such that $y_i\le g_i$ for all $i$. That is, for any such rotation, $y_i>g_i$ for some $i$. In this case, the warden can keep the game going indefinitely, simply by never reducing any number. If the warden does that, then all numbers will either stay the same or increase during the game. Thus, at any point in the game, at least one number will be larger than the corresponding number in the goal position; that means the goal will never be reached.
  
  Now assume that there is a rotation $y_1 y_2 \ldots y_n$ of the current position such that $y_i\le g_i$ for all $i$. The prisoner can then force a win by doing the following: first, the prisoner refuses to increase any numbers until the rotation $y_1 y_2 \ldots y_n$ is the current position. (In the meantime, the warden may have reduced some of the numbers. But even if that happens, we will still have $y_i\le g_i$ for all $i$.) Then, the prisoner follows the following strategy for each sequence of $n$ moves: as long as the warden just passes, the prisoner tries for the win by increasing each number to the corresponding number in the goal position. If the warden ever reduces a number, then the prisoner doesn't increase any more numbers for the rest of the cycle. At the end of the cycle, either the prisoner will have reached the goal, or the new position will be lexicographically smaller than the position at the start of the cycle. But we can't get lexicographically smaller positions forever, so the prisoner will eventually reach the goal.
\end{proof}

As an example: consider the earlier example where the goal position is 314 and the current position is 042. If the warden passes, then the prisoner transfers the 2 without changing it, giving the position 204. The prisoner now attempts to move to 314 over the next three turns. If the warden passes (not a smart move), then the prisoner moves to $4|20$. Now the warden has to pass, and the prisoner moves to $14|2$. The warden dare not pass again, or else the prisoner will move to 314 and win. So perhaps the warden moves to 114. This position is lexicographically smaller than the position at the start of the cycle, 204. The prisoner now repeats this strategy until he inevitably wins. 

We can construct a loop of numbers similar to a de Bruijn sequence, showing the optimal sequence of moves for a particular goal position. (As with the case where all numbers in the goal are the same, there will be exactly one position of each remoteness.) For example, here's the optimal sequence of moves when the goal is 321:

\begin{center}
(321)00010110200211120121220221300301310311320321
\end{center}

Note: if we change the goal to one of its rotations, then the prisoner can win from the same set of positions, but they will appear in a different order. Here's the optimal sequence when the goal is 132:

\begin{center}
(132)00010020110120210220300310321112122130131132
\end{center}

And here's the optimal sequence when the goal is 213:

\begin{center}
(213)00010020030110120131021031112113202203212213
\end{center}

I don't know if this generalization of de Bruijn sequences has appeared in the literature before. It seems likely that there are fast algorithms for finding the locations of particular strings in these sequences, as there are for lexicographically minimal de Bruijn sequences.

\section{The Warden Plays With Multiple Goals}

One thing bothered the warden about his game: under optimal play, the game ``tree'' isn't a tree at all, but a single chain. To make the game more interesting (and harder to analyze), the warden started playing games with multiple goals. Here is an example for the reader to puzzle out:

\begin{center}
  \includegraphics[height=3cm]{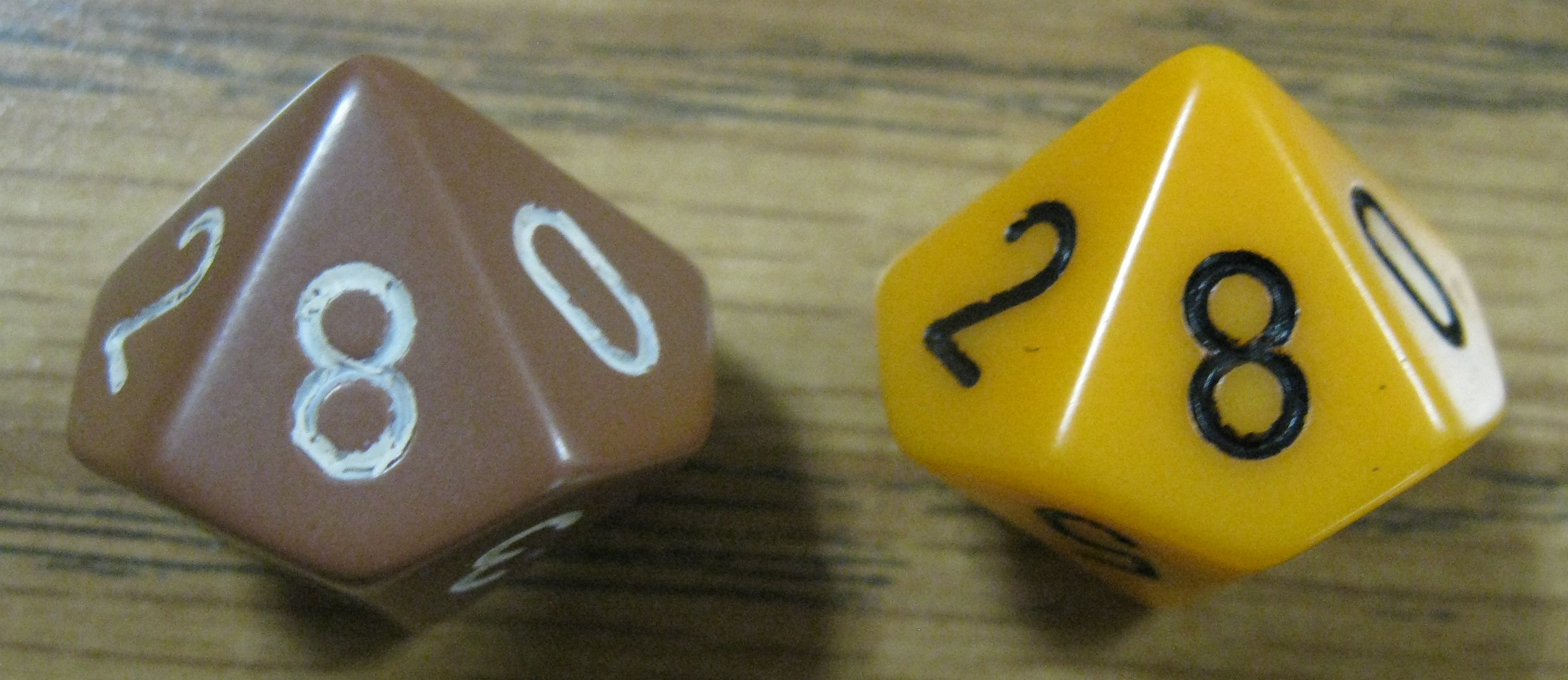}
\end{center}

This game is played with two 10-sided dice, where each die has the numbers from 0 through 9. Both dice show an 8 at the start. You, the prisoner, escape if the two dice together show a two-digit prime number. Leading zeros are allowed; thus, the possible win combinations are 02, 03, 05, 07, 11, 13, and so on.

It may sound like the warden is being generous by giving you many ways to win. But there is a limit to the warden's generosity; he has put a time limit on the game. You will earn your freedom only if the dice are showing a prime number within 19 moves. (Every transfer of a die counts towards those 19 moves, whether you or the warden transfers a die.) If 19 moves pass and no prime number appears, then you lose the game, and the warden will double your prison sentence. Now, the warden is not forcing you to play this game; if you think this task is impossible, then you may refuse to play, and just serve out your original sentence.

As far as I know, there are no quick algorithms for solving variations with multiple goals; I know of no faster way to solve this game than to find the remoteness of each and every position, one at a time. (Unlike the single-goal variations mentioned earlier, there can be multiple positions with the same remoteness.) So, what should you do? Is the warden trying to trick you by giving you an impossible task? Or can you actually force a win within 19 moves? I will leave it to the reader to figure it out.

\end{document}